\begin{document}

\centerline{}

\centerline{}

\centerline {\Large{\bf The geometry of polynomial diagrams}}

\centerline{}
\centerline{\bf {Maksim Alennikov\footnote{Moscow Pedagogical State University Moscow.Russia. e-mail: am.ds.07.88@gmail.com  }}}

\centerline{}

\newtheorem{Theorem}{\quad Theorem}[section]

\newtheorem{Definition}[Theorem]{\quad Definition}

\newtheorem{Corollary}[Theorem]{\quad Corollary}

\newtheorem{Lemma}[Theorem]{\quad Lemma}

\newtheorem{Example}[Theorem]{\quad Example}

\begin{abstract}
In this paper we  introduce the concept of polynomial diagrams and its area for special polynomials.We study the properties of polynomial area diagrams. The formula for the area of an arbitrary polynomial diagrams
\end{abstract}
\vspace{0.76cm}
{\bf Mathematics Subject Classification :} 51F05, 26A09, 26A06 \\
{\bf Keywords:} Polynomial diagrams, area of polynomial diagrams.

\section{Introduction}

In this paper, we consider polynomial diagrams.
A formula is derived which allows to find the area of an arbitrary polynomial diagrammy.Tak also studied the limit relations for sequences of squares polynomial diagrams depending on the parameter k. The same relations for the sequences studied area using finite differences of the second order. The object of study despite its simplicity has an interesting geometric structure.

Consider the polynomial of a special form (1), that we will use throughout the paper, where $q \in \mathbb{Z^{+}}$ and $q \not = 0$, k=degP(x), $n \in N$

\begin{equation}
\label{primer}
P(x)=\sum_{i=0}^{k}q^{n+i}x^{k-i} \quad
\end{equation}
$\\$
Let the map $\mathfrak{F}$ take each monomial $q^{n+i}x^{k-i}$ to integer
 point $(q^{n+i};k-i)$ namely
$\mathfrak{F}: q^{n+i}x^{k-i}\longrightarrow (q^{n+i};k-i) $, where $i=0,1,\dots,n$.
This construction is reminiscent of the Newton diagram, see for example [1]. Now we introduce the following concept.

\section{Preliminary Notes}

\begin{Definition} The flat polygon passing through the vertices at integer points $A(q^{n};0)$, $A_{i}(q^{n+i};k-i) $,where $i=1....k$ is called \textsl{Polynomial diagrams}  and is denoted by $\mathfrak{pd}$.

\end{Definition}
Now we shall give the following definition
\begin{Definition}
\textsl{Area} $\mathfrak{pd}$  is called area of  planar polygon that specifies the polynomial diagram and is denoted by
$S^{q}(\mathfrak{pd}) $
\end{Definition}
This polinomials diagrams is well defined from (1)
Take a $k=2$ and $n \in \mathbb{N}$. It follows from (1) that $P(x)=q^{n}x^{2}+q^{n+1}x+q^{n+2}$
While we concider $P(x)$ in this form if you do not agreed to and reverse.

\section{Main Results}

\begin{Theorem}
The area $S^{q}(\mathfrak{pd})$ is calculated using the following formula
\begin{equation}
\label{primer}
 S^{q}(\mathfrak{pd}) = \frac{q^{n}(q+3)(q-1)}{2}
\end{equation}

\end{Theorem}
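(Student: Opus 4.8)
The plan is to specialise everything to the case $k=2$ fixed just before the theorem, make explicit the lattice points that are the vertices of $\mathfrak{pd}$, and then compute the area of the resulting planar polygon by an elementary decomposition, using the shoelace formula as an independent check.

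First I would pin down the vertex set. For $k=2$ we have $P(x)=q^{n}x^{2}+q^{n+1}x+q^{n+2}$, so $\mathfrak{F}$ carries the three monomials to $(q^{n};2)$, $(q^{n+1};1)$, $(q^{n+2};0)$; together with the base point $A=(q^{n};0)$ the vertices of $\mathfrak{pd}$ are, in cyclic order, $A=(q^{n};0)$, $B=(q^{n};2)$, $C=(q^{n+1};1)$, $D=(q^{n+2};0)$. If $q=1$ these four points all lie on the line $x=1$, so $\mathfrak{pd}$ is degenerate and $S^{1}(\mathfrak{pd})=0$, which matches the asserted formula; I would dispose of this case first and then assume $q\ge 2$, so that $q^{n}<q^{n+1}<q^{n+2}$.

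Next I would observe that the quadrilateral $ABCD$ is non-convex, with its unique reflex vertex at $C$: the segment $BD$ has ordinate $2-\frac{2}{q+1}$ at abscissa $x=q^{n+1}$, and $2-\frac{2}{q+1}>1$ precisely because $q>1$, so $C$ lies strictly inside the triangle $T_{1}$ with vertices $A,B,D$. Consequently $\mathfrak{pd}=\overline{T_{1}\setminus T_{2}}$, where $T_{2}$ is the triangle with vertices $B,C,D$, and hence $S^{q}(\mathfrak{pd})=\operatorname{Area}(T_{1})-\operatorname{Area}(T_{2})$. Now $T_{1}$ has its base $[q^{n},q^{n+2}]$ on the $x$-axis, of length $q^{n}(q^{2}-1)$, and height $2$, so $\operatorname{Area}(T_{1})=q^{n}(q^{2}-1)$; the shoelace formula gives $\operatorname{Area}(T_{2})=\tfrac12\bigl|q^{n}(1-0)+q^{n+1}(0-2)+q^{n+2}(2-1)\bigr|=\tfrac12 q^{n}(q-1)^{2}$. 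Subtracting and factoring out $\tfrac12 q^{n}(q-1)$ yields $S^{q}(\mathfrak{pd})=q^{n}(q^{2}-1)-\tfrac12 q^{n}(q-1)^{2}=\tfrac12 q^{n}(q-1)\bigl(2(q+1)-(q-1)\bigr)=\tfrac12 q^{n}(q-1)(q+3)$, which is the claim. As a check, the shoelace formula applied directly to $A,B,C,D$ gives $\tfrac12\,|3q^{n}-2q^{n+1}-q^{n+2}|=\tfrac12 q^{n}(q^{2}+2q-3)=\tfrac12 q^{n}(q-1)(q+3)$, the same value.

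I do not expect a genuine obstacle here; once the polygon is identified, the theorem is a two-line area computation. The one point that really needs care is fixing the correct cyclic order of the four vertices — equivalently, recognising that $C=(q^{n+1};1)$ is a reflex vertex, so that $\mathfrak{pd}$ is the \emph{difference} $T_{1}\setminus T_{2}$ and not their union — together with the harmless degenerate case $q=1$.
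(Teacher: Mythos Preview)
Your argument is correct, but it takes a different route from the paper's. The paper drops the vertical line $x=q^{n+1}$ through $C$ to the $x$-axis, introducing the auxiliary point $(q^{n+1};0)$ and writing the quadrilateral as a \emph{sum}: a right trapezoid over $[q^{n},q^{n+1}]$ with parallel sides $2$ and $1$, plus a right triangle over $[q^{n+1},q^{n+2}]$ with legs $q^{n+2}-q^{n+1}$ and $1$. You instead draw the diagonal $BD$ and express $\mathfrak{pd}$ as a \emph{difference} of triangles, $T_{1}\setminus T_{2}$, after noting that $C$ is a reflex vertex. Both decompositions give the same answer with comparable effort. The paper's additive cut has the advantage that it generalises verbatim to the main Theorem~3.5 for arbitrary $k$ (a chain of $k-1$ trapezoids plus one triangle), which is exactly how the paper proceeds; your subtractive decomposition does not extend as cleanly, though your direct shoelace check would. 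On the other hand, you are explicit about the cyclic order, the reflex vertex, and the degenerate case $q=1$, none of which the paper addresses.
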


\begin{proof}
 Polynomial  diagram for (2) a flat the polygon passes through the vertices $A(q^{n};0), A_{0}(q^{n};2),A_{1}(q^{n+1};1),A_{2}(q^{n+2};0)$ and consider the point $ \hat A_{1}(q^{n+1};0)$. We divide a $\mathfrak{pd}$ on two polygons namely: $AA_{0}A_{1} \hat A_{1}$ - rectangular trapezoid and  $A_{1}A_{2} \hat A_{2}$ -  right triangle. Then  $S^{q}(\mathfrak{pd})$ has the following form
$$S^{q}(\mathfrak{pd})=S(AA_{0}A_{1} \hat A_{1})+S(A_{1}A_{2} \hat A_{2})$$
We claim that $S(AA_{0}A_{1} \hat A_{1})= \frac{3(q^{n+1}-q^{n})}{2}$ and $S(A_{1}A_{2} \hat A_{2})=\frac{q^{n+2}-q^{n+1}}{2}$

The result is

\begin{equation}
\label{primer}
S^{q}(\mathfrak{pd})= \frac{3(q^{n+1}-q^{n})}{2} + \frac{q^{n+2}-q^{n+1}}{2}
\end{equation}
 Now, after spending the expression (3) elementary algebraic manipulations we obtain (2). This completes the proof of Theorem 1.

\end{proof}

\begin{Lemma}
If $k=2$ and $n=0$ than
\begin{equation}
\label{primer}
 S^{q}(\mathfrak{pd}) = \frac{(q+3)(q-1)}{2}
\end{equation}

\end{Lemma}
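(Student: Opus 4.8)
The plan is to observe that this Lemma is nothing more than the special case $n=0$ of Theorem 1.1, so the entire argument reduces to a single substitution. First I would invoke the formula established in Theorem 1.1, namely
\[
S^{q}(\mathfrak{pd}) = \frac{q^{n}(q+3)(q-1)}{2},
\]
which holds for the diagram attached to the polynomial $P(x)=q^{n}x^{2}+q^{n+1}x+q^{n+2}$ whenever $k=2$ and $n\in\mathbb{N}$.

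Next I would simply set $n=0$. Since $q^{0}=1$ (this is where we use $q\in\mathbb{Z}^{+}$, $q\neq 0$, so the base is well defined and nonzero), the factor $q^{n}$ disappears and the right-hand side collapses to $\frac{(q+3)(q-1)}{2}$, which is exactly the claimed expression (4). For completeness one could also re-derive it from scratch: with $n=0$ the vertices become $A(1;0)$, $A_{0}(1;2)$, $A_{1}(q;1)$, $A_{2}(q^{2};0)$, and running the same decomposition into the trapezoid $AA_{0}A_{1}\hat A_{1}$ and the triangle $A_{1}A_{2}\hat A_{2}$ used in the proof of Theorem 1.1 gives $\frac{3(q-1)}{2}+\frac{q^{2}-q}{2}$, which after elementary algebra equals $\frac{(q+3)(q-1)}{2}$; this serves as a sanity check on the $n=0$ instance.

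There is essentially no obstacle here: the only thing to be mildly careful about is that $n=0$ is indeed permitted (the hypotheses of Theorem 1.1 ask for $n\in\mathbb{N}$, and under the convention $0\in\mathbb{N}$ used elsewhere in the paper this is fine), and that the geometric picture does not degenerate — for $q\ge 2$ the four points are genuinely distinct and in the expected position, so the polygon $\mathfrak{pd}$ and its area $S^{q}(\mathfrak{pd})$ are well defined. Granting that, the Lemma follows immediately.
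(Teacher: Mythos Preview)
Your proposal is correct and matches the paper's treatment: the paper gives no separate proof of the Lemma and simply remarks that it follows from formula~(2), i.e.\ from the preceding theorem by putting $n=0$, which is exactly your main argument. One cosmetic point: in the paper's numbering the relevant result is Theorem~3.1, not Theorem~1.1.
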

Since Lemma 3.2, it follows that (2) .

It is interesting to note that if we consider (4), for different values q, the area ratio approaches unity. Let is make these observations in the table 1.
Now we shall give the following theorem
\begin{Theorem}
 For any  $ q \in \mathbb{Z^{+}}$ and $q  \not = 0 $

 $$\lim_{q\to \infty} \frac{S^{q+1}(\mathfrak{pd})}{S^{q}(\mathfrak{pd})}=1, $$
\end{Theorem}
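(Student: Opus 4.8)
The plan is to reduce the statement to the explicit area formula (2) of Theorem 3.1 and then carry out an elementary limit computation. First I would substitute $q \mapsto q+1$ and $q \mapsto q$ into (2), which gives
$$S^{q+1}(\mathfrak{pd}) = \frac{(q+1)^{n}(q+4)q}{2}, \qquad S^{q}(\mathfrak{pd}) = \frac{q^{n}(q+3)(q-1)}{2},$$
so that the quotient under the limit sign factors as
$$\frac{S^{q+1}(\mathfrak{pd})}{S^{q}(\mathfrak{pd})} = \left(\frac{q+1}{q}\right)^{n} \cdot \frac{q(q+4)}{(q-1)(q+3)}.$$

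Second, I would treat the two factors separately. The factor $\bigl((q+1)/q\bigr)^{n} = (1 + 1/q)^{n}$ tends to $1$ as $q \to \infty$: the exponent $n$ is a fixed natural number, $1/q \to 0$, and $x \mapsto x^{n}$ is continuous. The rational factor $\dfrac{q(q+4)}{(q-1)(q+3)} = \dfrac{q^{2}+4q}{q^{2}+2q-3}$ is a ratio of two monic quadratics; dividing numerator and denominator by $q^{2}$ shows it tends to $1$. By the product rule for limits, the whole expression tends to $1 \cdot 1 = 1$, which is exactly the assertion.

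I do not anticipate any genuine obstacle: the theorem is an immediate corollary of the closed formula (2). The only point demanding a little care is the exponent $n$ — one must keep $n$ fixed while $q \to \infty$, so that $(1+1/q)^{n}$ is not an indeterminate form but simply converges to $1$. If one wishes to dispense with the parameter $n$, it suffices to invoke Lemma 3.2 (the case $n=0$) and work directly with $S^{q}(\mathfrak{pd}) = \frac{(q+3)(q-1)}{2}$, in which case the quotient reduces to $\frac{q(q+4)}{(q-1)(q+3)}$ and the conclusion is the same. A closing sentence relating this limit to the numerical ratios recorded in Table 1 would complete the discussion.
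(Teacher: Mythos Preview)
Your proposal is correct and follows essentially the same route as the paper: substitute the closed area formula and compute the resulting rational limit. The paper in fact invokes only the $n=0$ case (Lemma~3.2, which it calls ``Corollary~1''), so your alternative paragraph dispensing with the factor $\bigl(1+1/q\bigr)^{n}$ is exactly the paper's argument; your more general treatment with arbitrary fixed $n$ is a harmless and correct extension.
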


\begin{proof}
Taking into account  \textsl{ Corollary 1}, we obtain

$\lim\limits_{q\to \infty} \frac{S^{q+1}(\mathfrak{pd})}{S^{q}(\mathfrak{pd})}=\lim\limits_{q\to \infty}\frac{(q+1)(q+4)}{(q+3)(q-1)}=\lim\limits_{q \to \infty} \frac{q^2+5q+4}{q^2+2q-3}=1 $.
The theorem is proved.
\end{proof}

By definition, put formula
\begin{equation}
\label{primer}
\def\MYdef{\mathrel{\stackrel{\rm def}=}}
\Delta^{2}S^{q}(\mathfrak{pd})\MYdef S^{q+2}(\mathfrak{pd})-2S^{q+1}(\mathfrak{pd})+S^{q}(\mathfrak{pd})
\end{equation}

Again there is a desire  to consider the values of pairwise differences. As a result, we obtain that it is constant for all values of one.

Now we shall give the following theorem
\begin{Theorem}
For any  $ q \in \mathbb{Z^{+}}$ and $q  \not = 0 ,  \Delta^{2}S^{q}(\mathfrak{pd })=1$
\end{Theorem}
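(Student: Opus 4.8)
The plan is to reduce the statement to a one-variable polynomial identity in $q$. By Corollary 1 (equivalently Lemma 3.2), with the normalization $n=0$ the area is the quadratic polynomial $S^{q}(\mathfrak{pd}) = \frac{(q+3)(q-1)}{2} = \frac{1}{2}q^{2} + q - \frac{3}{2}$ in the variable $q$. The assertion $\Delta^{2}S^{q}(\mathfrak{pd}) = 1$ is then nothing more than a statement about the second finite difference of this quadratic, and I would prove it in one of two interchangeable ways.

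The first route is to invoke the elementary fact that for any quadratic $f(q) = aq^{2} + bq + c$ one has $f(q+2) - 2f(q+1) + f(q) = 2a$, since the second-difference operator annihilates constant and linear terms while sending $q^{2}$ to the constant $2$. Applying this with $a = \frac{1}{2}$ gives $\Delta^{2}S^{q}(\mathfrak{pd}) = 2\cdot\tfrac12 = 1$ directly. The second route, more in the spirit of the earlier proofs, is a brute-force substitution: compute $S^{q+1}(\mathfrak{pd}) = \frac{(q+4)(q+1)}{2}$ — which is precisely the numerator already used in the proof of Theorem 3.3 — and $S^{q+2}(\mathfrak{pd}) = \frac{(q+5)(q+2)}{2}$, then form $S^{q+2}(\mathfrak{pd}) - 2S^{q+1}(\mathfrak{pd}) + S^{q}(\mathfrak{pd})$ over the common denominator $2$ and expand; the $q^{2}$- and $q$-coefficients cancel and the constant term collapses to $2$, so the expression equals $\frac{2}{2} = 1$.

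There is essentially no obstacle here once the closed form for $S^{q}(\mathfrak{pd})$ is in hand: the theorem is a routine finite-difference computation on a degree-two polynomial, and the ``constancy'' observed empirically is just the classical fact that second differences of quadratics are constant. The only point that genuinely deserves care is consistency of normalization: one must use $n=0$ (as in Lemma 3.2), because for general $n$ the closed form $S^{q}(\mathfrak{pd}) = \frac{q^{n}(q+3)(q-1)}{2}$ is not polynomial in $q$, and then $\Delta^{2}S^{q}(\mathfrak{pd})$ would fail to be constant. I would therefore state explicitly at the start of the proof that $n=0$ is assumed, matching the hypothesis under which Corollary 1 was derived, and then carry out whichever of the two computations above is preferred.
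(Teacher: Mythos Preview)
Your first route is correct and is a mild conceptual sharpening of what the paper does: the paper simply carries out the brute-force substitution, while you invoke the general fact that the second finite difference of any quadratic $aq^{2}+bq+c$ equals $2a$ and read off $2\cdot\tfrac{1}{2}=1$. That argument is complete as stated.

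Your second route, however, contains a slip that makes the computation fail. Substituting $q\mapsto q+1$ into $S^{q}(\mathfrak{pd})=\frac{(q+3)(q-1)}{2}$ gives
\[
S^{q+1}(\mathfrak{pd})=\frac{(q+4)\,q}{2},\qquad S^{q+2}(\mathfrak{pd})=\frac{(q+5)(q+1)}{2},
\]
not $\frac{(q+4)(q+1)}{2}$ and $\frac{(q+5)(q+2)}{2}$ as you wrote. The factor $(q+1)(q+4)$ you borrowed from the proof of Theorem~3.3 is itself a misprint in the paper; the paper's own proof of the present theorem uses the correct $q(q+4)$. With your shifted values the second difference would come out to
\[
\frac{(q^{2}+7q+10)-2(q^{2}+5q+4)+(q^{2}+2q-3)}{2}=\frac{-q-1}{2},
\]
so the cancellation you assert does not actually occur. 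Once the shifts are corrected, the computation is exactly the paper's:
\[
\frac{q^{2}+6q+5}{2}-\frac{2q^{2}+8q}{2}+\frac{q^{2}+2q-3}{2}=\frac{2}{2}=1.
\]
Your remark that one must take $n=0$ is apt and matches the paper's tacit use of Lemma~3.2.
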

\begin{proof}
 Taking into account  \textsl{ Lemma 3.2}, we obtain  $S^{q+1}(\mathfrak{pd })=\frac{q(q+4)}{2}=\frac{q^2+4q}{2}$  and $S^{q+2}(\mathfrak{pd })=\frac{(q+5)(q+1)}{2}=\frac{q^2+6q+5}{2}$.

 Now substituting these formulas and (4) to (5).

$ \Delta^{2}S^{q}(\mathfrak{pd })= \frac{q^2+6q+5}{2} - \frac{2q^2+8q}{2} +  \frac{q^{2}+2q-3}{2}=\frac{2}{2}=1$.

 This completes the proof of theorem 3.
\end{proof}

Note that the basis of the results set out in the first part of this work were obtained purely an experimental way. The motivation of this study was the work  [Ar2005].It is also interesting to note that the result of Theorem 3, can not be generalized to the general case, it can be seen from numerical calculations.
After the above considerations we may formulate the main result of this work.

Consider the polynomial in the form (1).

\begin{Theorem}
 If $\mathfrak{pd}$ -polynomial diagram for (1), than

$$S^{q}(\mathfrak{pd})= \sum_{m=0}^{k-2} \frac{q^{n+m}(q-1)(2k-2m-1)}{2} + \frac{q^{n+k}-q^{n+k-1}}{2} $$
\end{Theorem}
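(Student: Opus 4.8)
The plan is to generalise the two-piece dissection used for $k=2$ in Theorem~3.1 to a $k$-piece dissection. First I would dispose of the degenerate case $q=1$: then every vertex of $\mathfrak{pd}$ lies on the line $x=1$, so the diagram has zero area, and the right-hand side of the asserted formula also vanishes, since the sum carries the factor $q-1$ and $q^{n+k}-q^{n+k-1}=0$. So assume $q\ge 2$, whence the abscissas $q^{n}<q^{n+1}<\dots<q^{n+k}$ are strictly increasing.

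For each $i=0,1,\dots,k$ let $\hat A_{i}=(q^{n+i},\,0)$ be the foot of the perpendicular from the vertex $A_{i}=(q^{n+i},\,k-i)$ onto the horizontal axis, and note that $\hat A_{0}=A$ and $\hat A_{k}=A_{k}$, so these two feet are already vertices of $\mathfrak{pd}$. Since every $A_{i}$ lies on or above the axis and the abscissas increase strictly, the polyline $A_{0}A_{1}\cdots A_{k}$ is the graph of a piecewise-linear function on $[q^{n},q^{n+k}]$; hence the region bounded by $\mathfrak{pd}$ is precisely the region between that graph and the axis, and the vertical lines $x=q^{n+i}$ cut it into the $k$ pieces
$$T_{m}:=\hat A_{m}A_{m}A_{m+1}\hat A_{m+1},\qquad m=0,1,\dots,k-1,$$
with pairwise disjoint interiors. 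For $m\le k-2$ the piece $T_{m}$ is a right trapezoid whose parallel vertical sides have lengths $k-m$ and $k-m-1$ and whose height is $q^{n+m+1}-q^{n+m}=q^{n+m}(q-1)$; for $m=k-1$ it degenerates, because $A_{k}=\hat A_{k}$, to the right triangle with legs $1$ and $q^{n+k}-q^{n+k-1}$.

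Then I would just add up the elementary areas. The trapezoid formula gives
$$S(T_{m})=\frac{(k-m)+(k-m-1)}{2}\,q^{n+m}(q-1)=\frac{q^{n+m}(q-1)(2k-2m-1)}{2}\qquad(0\le m\le k-2),$$
and $S(T_{k-1})=\tfrac12\,(q^{n+k}-q^{n+k-1})$, so summing over all $k$ pieces yields
$$S^{q}(\mathfrak{pd})=\sum_{m=0}^{k-2}\frac{q^{n+m}(q-1)(2k-2m-1)}{2}+\frac{q^{n+k}-q^{n+k-1}}{2},$$
which is the claim. As a consistency check, $k=2$ collapses this to $\tfrac32 q^{n}(q-1)+\tfrac12 q^{n+1}(q-1)=\tfrac12 q^{n}(q-1)(q+3)$, recovering Theorem~3.1.

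I do not anticipate a genuine difficulty here: the only point that needs care is the legitimacy of the vertical-strip dissection — that the pieces $T_{m}$ tile $\mathfrak{pd}$ with no overlaps and no gaps — and this rests solely on the strict monotonicity of the abscissas $q^{n+i}$ (for $q\ge 2$) together with the fact that each vertex $A_{i}$ has nonnegative ordinate. Everything after that is the routine trapezoid-area bookkeeping displayed above.
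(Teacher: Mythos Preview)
Your proposal is correct and follows essentially the same route as the paper: both decompose $\mathfrak{pd}$ by vertical lines through the vertices into $k-1$ right trapezoids $T_{0},\dots,T_{k-2}$ plus the terminal right triangle $T_{k-1}$, compute each trapezoid area as $\tfrac12\big((k-m)+(k-m-1)\big)\,q^{n+m}(q-1)$, and sum. Your version is somewhat more careful than the paper's (you separate out the degenerate case $q=1$, justify explicitly why the strips tile $\mathfrak{pd}$, and add the $k=2$ consistency check), but the underlying argument is the same trapezoid dissection.
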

\begin{proof}
The proof is completely analogous to that of Theorem 1. The basic idea is to divide a polynomial chart on $k-1$ rectangular trapezoids and one right-angled triangle. And then only remains to summarize the data space. In fact, we used the method of trapezoids.

Now give the proof in more detail.

We fix a number $m$ satisfies the condition $0 \leqslant m \leqslant k-2$ .Divide a polynomial diagram on $k-1$ rectangular trapezoid.  Consider the trapezium with vertices at points  $A(q^{n+m};k-m); B(q^{n+m};0); C(q^{n+m+1};k-m+1); D(q^{n+m+1};0)$.

Then its area is calculated by the following formula:

\begin{equation}
\label{primer}
S(ABCD)=\frac{(q^{n+m+1}-q^{n+m})(2k-2m-1)}{2}
\end{equation}

Now it is necessary to take into account that such trapezoids is $k-1$, but it should be noted that the summation index is not taking up to $k-1$ to $k-2$ as well as we need to extreme tip of the trapezoid $k-1$ coincides with the vertex of a right triangle. Then we have
$m+1=k-1$, then $m=k-2$.

It remains only to sum equation (6) and add to the resulting area of a right triangle, and hold small algebraic operations.

$$S^{q}(\mathfrak{pd})= \sum_{m=0}^{k-2} \frac{q^{n+m}(q-1)(2k-2m-1)}{2} + \frac{q^{n+k}-q^{n+k-1}}{2} $$

\end{proof}

\begin{table}[ht]
\centering
\caption{}
\label{tab:1}       
%
%
\begin{tabular}{lll}
\hline\noalign{\smallskip}
q & $S^{q}(\mathfrak{pd})$ &$\frac{S^{q+1}(\mathfrak{pd})}{S^{q}(\mathfrak{pd})}$  \\
\noalign{\smallskip}\hline\noalign{\smallskip}
2 &2.5 &2.4  \\
3 &6  &1.75  \\

4 &10.5  &1.52  \\

5& 16 &1.4  \\

6 & 22.5 &1.3  \\

... &  ...&  ...\\

16 & 142.5 &1.12  \\

\noalign{\smallskip}\hline
\end{tabular}
\end{table}


\end{document}